\documentclass[11pt]{amsart}
\usepackage{amsmath}
\usepackage{amssymb}
\usepackage{pxfonts}
\usepackage{amsfonts}
\usepackage{mathrsfs}
\usepackage{graphicx}
\usepackage{texdraw}
\usepackage{graphpap}
\usepackage{wasysym}
\usepackage{enumitem}
\usepackage{color}
\usepackage[OT2,T1]{fontenc}

\theoremstyle{plain}

\newtheorem*{mth}{Theorem}

\newtheorem*{cor}{Corollary}
\newtheorem{lem}{Lemma}

\theoremstyle{remark}

\newtheorem*{rem}{Remark}


\newcommand{\vt}{\vartheta}

\newcommand{\sep}{s_0}

\newcommand{\event}{{\mathscr F}}

\newcommand{\calW}{{\mathscr W}}

\newcommand{\bfE}{{\mathbf E}}

\newcommand{\Int}{\operatorname{Int}}

\newcommand{\R}{{\mathbb R}}

\newcommand{\D}{{\mathbb D}}

\newcommand{\C}{{\mathbb C}}
\newcommand{\E}{{\mathbf E}}
\newcommand{\vro}{{\varrho}}

\newcommand{\const}{\mathrm{const.}}

\newcommand{\re}{\operatorname{Re}}

\newcommand{\Ham}{H}

\newcommand{\Prob}{\mathbf{P}}
\renewcommand{\Pr}{{\mathbb{P}}}
\newcommand{\Exn}{{\mathbb{E}}}

\renewcommand{\d}{{\partial}}
\newcommand{\dbar}{\bar{\partial}}

\newcommand{\supp}{\operatorname{supp}}

\newcommand{\Lap}{\Delta}

\def\lpar{\left (}
\def\rpar{\right )}
\def\labs{\left |}
\def\rabs{\right |}
\def\babs#1{\labs {#1} \rabs}

\begin{document}

\title[Low temperature $\beta$-ensembles]{Repulsion in low temperature $\beta$-ensembles}



\begin{abstract} We prove a result on separation of particles in a two-dimensional Coulomb plasma, which holds provided that the inverse temperature $\beta$ satisfies $\beta>1$. For large $\beta$, separation is obtained at the same scale as the conjectural Abrikosov lattice optimal separation.
\end{abstract}

\author{Yacin Ameur}

\address{Yacin Ameur\\
Department of Mathematics\\
Faculty of Science\\
Lund University\\
P.O. BOX 118\\
221 00 Lund\\
Sweden}

\email{Yacin.Ameur@maths.lth.se}

\maketitle

Consider a large but finite system of identical point-charges $\{\zeta_i\}_1^n$ in the plane $\C$, in the presence of an external field $nQ$, such that $Q(\zeta)$ is "large'' near $\zeta=\infty$. The system is picked randomly from the Boltzmann-Gibbs
distribution at inverse temperature $\beta>1$,
\begin{equation*}d \Prob_n^{(\beta)}(\zeta)=\frac 1 {Z_n^{(\beta)}}\,e^{-\beta\Ham_n(\zeta)}\,d A^{\otimes n}(\zeta),\quad
\zeta=(\zeta_1,\ldots,\zeta_n)\in\C^n.\end{equation*}
Here $\Ham_n$ is the total energy
\begin{equation*}\Ham_n\left(\zeta_1,\ldots,\zeta_n\right)=\sum_{j\ne k}\log\frac 1 {|\,\zeta_j-\zeta_k\,|}+n\sum_{j=1}^nQ(\zeta_j),\end{equation*}
$dA=dxdy/\pi$ is Lebesgue measure on $\C$ divided by $\pi$. The constant
$Z_n^{(\beta)}=\int e^{-\beta\Ham_n}\,d A^{\otimes n}$ is the so-called partition function of the ensemble.

A random sample $\{\zeta_j\}_1^n$ might be termed "Coulomb gas'', "one-component plasma'', or "$\beta$-ensemble''. For brevity, we use
"system'' as a synonym.

It is well-known that the system tends, on average,
to follow Frostman's equilibrium measure in external potential $Q$. The support of the equilibrium measure is a compact set which we call the droplet.

The rough approximation afforded by the equilibrium measure is too crude to reveal details on a microscopic scale. However, it is believed on physical grounds that
the particles should be evenly spread out in the interior of the droplet, with a non-trivial behaviour near the boundary -
the Hall effect. Everything of importance goes on in the vicinity of the droplet.

In this note, we prove that the distance between neighbouring particles at a given location
in the plane is large with high probability. Further, the distance tends to increase with $\beta$, and as $\beta\to\infty$, we recover formally the separation theorem for Fekete sets from the papers \cite{A,AOC}.

\begin{rem}
The case of minimum-energy configurations or "Fekete sets'' is sometimes referred to as "the case $\beta=\infty$''. We will follow this tradition, but we want to emphasize that "$\beta=\infty$'' is just a figure of thought, not a rigorous limit.
\end{rem}

\section*{Formulation of results}
Let $Q:\C\to\R\cup\{+\infty\}$ be a suitable function of sufficient increase
near $\infty$; precise conditions are given below. We call $Q$ the external potential.

Let $\mu$ be a compactly supported Borel probability measure on $\C$.
The weighted logarithmic energy of $\mu$ is defined by
\begin{equation*}\label{egy}I_Q[\mu]=\iint_{\C^2}\log \frac 1 {|\,\zeta-\eta\,|}\, d\mu(\zeta)d\mu(\eta)+
\int_\C Q\, d\mu.\end{equation*}
Assuming that $Q$ obeys some natural conditions recalled below there is a unique compactly supported probability measure $\sigma$ which minimizes $I_Q$. This is Frostman's equilibrium measure in external potential $Q$.
The support $S=\supp\sigma$ is known as the droplet, and the equilibrium measure takes the form (see \cite{ST})
\begin{equation*}\label{eqm}d\sigma(\zeta)=\chi_S(\zeta) \Lap Q(\zeta)\, dA(\zeta),
\end{equation*}
where we write $\Lap=\d\dbar$ for $1/4$ times the standard Laplacian; $\chi_S$ is the characteristic function of the set $S$.

\begin{rem}
Let $\{\zeta_j\}_1^n$ be a random sample with respect to $\Prob_n^{(\beta)}$.
Write $\bfE_n^{(\beta)}$ for the expectation with respect to $\Prob_n^{(\beta)}$.
 It is well-known that
$\bfE_n^{(\beta)}[\frac 1 n \sum_{j=1}^nf(\zeta_j)]\to \sigma(f)$ as $n\to\infty$ for each continuous bounded function $f$ on $\C$. See \cite{HM,Jo}.
\end{rem}

The preceding remark shows that, in a sense, the equilibrium measure gives a first approximation to the macroscopic behaviour of the system. We here want to study microscopic properties.
For this, we could fix a point $p\in\C$, which might depend on $n$, and zoom on it at an appropriate rate.
However, for technical reasons it is easier to choose the coordinate system so that $p=0$. In other words, $0$ will in the following denote the origin of an $n$-dependent coordinate system which can be obtained from some static reference system by rigid motion.

\smallskip

Let $\D_r=\D_r(0)$ denote the disk center $0$ radius $r$.
By the \textit{microscopic scale} $r_n$ at $0$ we mean the radius such that $$n\int_{\D_{r_n}}\Lap Q\, dA=1.$$

We allow for any situation such that $r_n$ is well-defined. This is a mild restriction. Indeed, we always have $\Lap Q\ge 0$ on $S$, since $\sigma$ is a probability measure. By our assumptions below, this implies that $r_n$ is always well-defined if $0$ is in the interior of $S$. Also,
if we have $\Lap Q>0$ on some portion of $\d S$, then
$r_n$ is well-defined when $0$ is in some neighbourhood of that portion. Since the behaviour of the gas is of interest only in a neighbourhood of the droplet, we can thus essentially treat all cases of interest.

\smallskip

Given a sample $\{\zeta_j\}_1^n$, we rescale about $0$ and consider the process $\{z_j\}_1^n$ where
\begin{equation}\label{wesc}z_j=r_n^{-1}\zeta_j.\end{equation}
We denote by $\Pr_n^{(\beta)}$ the image of $\Prob_n^{(\beta)}$ under the map \eqref{wesc} and write
$\Exn_n^{(\beta)}$ for the corresponding expectation. Also let $\D=\D_1$ be the unit disk.

Fix a large $n$ and let $\event_n$ be the event that at least one of the $z_j$
falls in $\D$. Denote $\eta=\eta_n=\Pr_n^{(\beta)}(\event_n)$.

Given a random sample $\{z_j\}_1^n\in\event_n$
we define a number $\sep$ by
$$\sep=\min_{z_j\in\D}\min_{k\ne j}|z_j-z_k|,\quad (\{z_j\}_1^n\in \event_n).$$
Thus $\sep$ is the largest rescaled distance from a particle in $\D$ to its nearest neighbour. We refer
to $\sep$ as the \textit{spacing} of the sample, in the vicinity of the point $0$.

We are now prepared to formulate our main results. The following result shows that the strength of repulsion
tends to
increase with $\beta$.

\begin{mth} \label{sobthm} Suppose that $\beta>1$ and fix $n_0\ge 1$. Then there is a constant $c=c(n_0,\beta)>0$
such that if $n\ge n_0$ and $0<\epsilon<1$, then
\begin{equation}\label{und}\Pr_n^{(\beta)}( \{s_{0}\ge c\cdot n^{-\frac 1 {\beta-1}}\cdot(\epsilon\eta)^{\frac 1 {2(\beta-1)}}\}\,|\, \event_n)\ge 1-m_0\epsilon,\end{equation}
where $m_0=16n^{\frac 2{\beta-1}}c^{-2}(\epsilon\eta)^{-\frac 1 {\beta-1}}$.
Moreover, given any $\beta_0>1$, $c$ can be chosen independent of $\beta$ when $\beta\ge \beta_0$.
\end{mth}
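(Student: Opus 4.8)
The plan is to estimate directly the probability that two particles come close, using the joint intensity (one-point and two-point correlation functions) of the rescaled process, and then bound the relevant correlation functions by exploiting the $\beta>1$ assumption via a comparison with the $\beta=1$ (determinantal) case or, more robustly, via an energy/entropy argument. Concretely, fix the target scale $\delta = c\,n^{-1/(\beta-1)}(\epsilon\eta)^{1/(2(\beta-1))}$ and consider the ``bad'' event $\{s_0<\delta\}\cap\event_n$, i.e. there exist indices $j\ne k$ with $z_j\in\D$ and $|z_j-z_k|<\delta$. The first step is the union bound
\begin{equation*}
\Pr_n^{(\beta)}\bigl(\{s_0<\delta\}\cap\event_n\bigr)\le \binom n2\,\Pr_n^{(\beta)}\bigl(z_1\in\D,\ |z_1-z_2|<\delta\bigr)=\binom n2\!\!\iint_{\substack{w\in\D\\|w-w'|<\delta}}\!\!R_{n,2}^{(\beta)}(w,w')\,dA(w)\,dA(w'),
\end{equation*}
where $R_{n,2}^{(\beta)}$ is the (rescaled) two-point function. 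Thus everything reduces to an upper bound on $R_{n,2}^{(\beta)}$ near the diagonal.

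The heart of the matter is to show that the presence of the Coulomb repulsion forces $R_{n,2}^{(\beta)}(w,w')$ to vanish like a power of $|w-w'|$ as $w'\to w$, with an exponent governed by $\beta$. I would isolate the pair interaction: writing out $e^{-\beta\Ham_n}$, the factor coming from the $j=1,k=2$ term of $\Ham_n$ is $|\zeta_1-\zeta_2|^{2\beta}$, i.e. after rescaling $|w-w'|^{2\beta}$ times a remainder. Integrating out $\zeta_3,\dots,\zeta_n$ and estimating the remaining factors crudely (using the growth of $Q$ near infinity and the fact that all other pair interactions and the confining term can only help — they are bounded above on the relevant region, or can be absorbed into the normalization after a Jensen-type argument) yields a bound of the shape $R_{n,2}^{(\beta)}(w,w')\le C_n\,|w-w'|^{2\beta}$ on a neighbourhood of $\D$, uniformly for $w'$ near $w$. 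Feeding this into the double integral above gives, since $2\beta>2$,
\begin{equation*}
\iint_{\substack{w\in\D\\|w-w'|<\delta}} |w-w'|^{2\beta}\,dA\,dA \le \const\cdot \delta^{2\beta+2},
\end{equation*}
so that $\Pr_n^{(\beta)}(\{s_0<\delta\}\cap\event_n)\le C_n' n^2\delta^{2\beta+2}$. Dividing by $\eta=\Pr_n^{(\beta)}(\event_n)$ to pass to the conditional probability, and plugging in the chosen $\delta$, one checks that the exponents match: $n^2\delta^{2\beta+2}/\eta \sim n^2\cdot n^{-(2\beta+2)/(\beta-1)}(\epsilon\eta)^{(\beta+1)/(\beta-1)}/\eta$. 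A short computation (the exponent of $n$ is $2-(2\beta+2)/(\beta-1)=-4/(\beta-1)$, matching the claimed $m_0\propto n^{4/(\beta-1)}$ after one more rescaling of $\delta$ by a constant) shows the bound is $\le m_0\epsilon$ with $m_0$ as stated, which is \eqref{und}. For the uniformity-in-$\beta$ claim when $\beta\ge\beta_0$, one tracks that the constant $C_n'$ can be bounded independently of $\beta\ge\beta_0$ — the pair-interaction exponent only helps as $\beta$ grows, and the only $\beta$-dependence that could blow up is in the normalization constant $Z_n^{(\beta)}$, which is controlled by a Laplace-type lower bound uniform on $\beta\ge\beta_0$.

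The main obstacle is obtaining the bound $R_{n,2}^{(\beta)}(w,w')\le C_n|w-w'|^{2\beta}$ with a constant $C_n$ that is under control — in particular finite for each fixed $n\ge n_0$ and not growing too fast, and uniform in $\beta\ge\beta_0$. The delicate point is that after pulling out the single factor $|\zeta_1-\zeta_2|^{2\beta}$, the remaining integrand $e^{-\beta(\Ham_n - \log|\zeta_1-\zeta_2|^{-1})}$ integrated over $\zeta_2$ and $\zeta_3,\dots,\zeta_n$ must be compared to $Z_n^{(\beta)}$; naively the remaining $\zeta_2$-dependence includes the interactions $\prod_{k\ge3}|\zeta_2-\zeta_k|^{2\beta}$ and the term $e^{-n\beta Q(\zeta_2)}$, and one needs that freezing $\zeta_2$ near $\zeta_1$ costs only a bounded factor. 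This is exactly where the confinement hypotheses on $Q$ (its growth at infinity, guaranteeing $\zeta$-integrability, and regularity near the droplet) enter, and where one must be careful; I would handle it by a change of variables moving $\zeta_2$ to a fixed reference location together with the elementary inequality that the logarithmic pair energy of a configuration changes by a bounded amount under a bounded perturbation of one coordinate inside a fixed compact set, so that the ratio of the constrained integral to $Z_n^{(\beta)}$ is bounded by $\exp(\beta\cdot O_n(1))$, which is acceptable for fixed $n$ and, since the $O_n(1)$ can be taken $\beta$-free, also uniform in $\beta\ge\beta_0$.
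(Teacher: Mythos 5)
Your route (bound the near-diagonal two-point intensity by $C_n|w-w'|^{2\beta}$, then a union bound over pairs and division by $\eta$) is genuinely different from the paper's, but as sketched it has a real gap at its core. The theorem requires a single constant $c=c(n_0,\beta)$ valid for \emph{all} $n\ge n_0$, with the explicit polynomial rates $n^{-1/(\beta-1)}$ and $m_0\asymp n^{2/(\beta-1)}$ (note: $n^{2/(\beta-1)}$, not $n^{4/(\beta-1)}$ as you wrote, though there is slack in the exponent bookkeeping). So your constant $C_n$ in the correlation bound must be controlled uniformly (or at worst polynomially) in $n$. The argument you propose for it does not deliver this: after extracting the factor $|\zeta_1-\zeta_2|^{2\beta}$, comparing the constrained integral over $\zeta_2,\ldots,\zeta_n$ with $Z_n^{(\beta)}$ is exactly an upper bound on a (conditioned) one-point density, and the "bounded perturbation of one coordinate costs a bounded amount" step is false uniformly in $n$: moving $\zeta_2$ by a fixed amount changes the energy by $n\bigl(Q(\zeta_2')-Q(\zeta_2)\bigr)$ plus the sum of $n-2$ pair terms, i.e.\ by $O(n)$, so the ratio you control is $e^{O(\beta n)}$, not $e^{\beta O(1)}$. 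With such a constant the separation scale you would obtain is exponentially small in $n$, far short of the claimed $n^{-1/(\beta-1)}$; and "acceptable for fixed $n$" is not acceptable here, since $c$ may depend on $n_0$ but not on $n$. In short, your scheme presupposes a local density/correlation upper bound for the $\beta$-ensemble that is itself a nontrivial theorem and is not obtained by the freezing argument you describe.

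For comparison, the paper circumvents any estimate of $Z_n^{(\beta)}$ or of correlation functions by an exact identity for the weighted Lagrange interpolation polynomials: $\E_n^{(\beta)}\bigl[\chi_U(\zeta_j)\int_\C|\ell_j|^{2\beta}\,dA\bigr]=|U|$, which follows from $|\ell_j(\zeta)|^{2\beta}e^{-\beta\Ham_n(\ldots,\zeta_j,\ldots)}=e^{-\beta\Ham_n(\ldots,\zeta,\ldots)}$ and Fubini. This $L^{2\beta}$ control is turned into pointwise information via a Bernstein-type gradient estimate for elements of $\calW_n$ and Morrey's embedding $W^{1,2\beta}\hookrightarrow C^{0,1-1/\beta}$ (this is precisely where $\beta>1$ enters), applied to $\vro_j$ which equals $1$ at $z_j$ and $0$ at its neighbour; Chebyshev plus a packing argument then yields the stated conditional probability with explicit $m_0$. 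If you want to salvage your approach, you would need to import a genuine local law for two-dimensional $\beta$-ensembles giving the near-diagonal bound with $n$-uniform constants, which is a substantially heavier input than anything used in the paper.
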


The left hand side in \eqref{und} should be understood as a conditional probability given that $\event_n$ has occurred.


\smallskip

The next result gives a kind of separation which holds for large $\beta$. To this end, it is natural to assume some kind of lower bound on the probability $\eta_n$.
One possibility is to assume that that $\inf\eta_n>0$, which is certainly a reasonable assumption in many cases. However, it will suffice to assume existence of
some number $\vt\ge 0$ such that
\begin{equation}\label{proass}\eta_n\ge\const n^{-2\vt},\quad (\const>0).\end{equation}
For simplicity, we will also assume that we are zooming on a regular point,
\begin{equation}\label{regass}\Lap Q(0)\ge\const>0.\end{equation}



\begin{cor}\label{abrthm} Below fix a positive number $c$ with $c<1/(8\sqrt{e})$.
\begin{enumerate}[label=(\roman*)]
\item \label{ett} Suppose that \eqref{regass} holds and let $n$ be a given large integer. Then
$$\lim_{\beta\to\infty}\Pr_n^{(\beta)}\left(\left\{s_0> c\right\}\,|\, \event_n\right)=1.$$
\item \label{tva} Suppose that \eqref{proass} and \eqref{regass} hold. Also fix a parameter $\mu>0$. Then
$$\lim_{n\to\infty}\inf_{\beta\ge \mu\log n} \Pr_n^{(\beta)}\left(\left\{s_0> ce^{-(1+\vt)/\mu}\right\}\,|\, \event_n\right)=1.$$
\end{enumerate}
\end{cor}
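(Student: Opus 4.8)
The strategy is to extract both parts as direct consequences of the main Theorem \ref{sobthm}, by making a judicious choice of the free parameter $\epsilon$ in the estimate \eqref{und} and then letting $\beta$ (and $n$) go to infinity. First I would rewrite the conclusion of Theorem \ref{sobthm} in a more transparent form: the lower bound in \eqref{und} reads $\sep \ge c\,n^{-1/(\beta-1)}(\epsilon\eta)^{1/(2(\beta-1))}$, and the exceptional probability is $m_0\epsilon = 16\,n^{2/(\beta-1)}c^{-2}(\epsilon\eta)^{-1/(\beta-1)}\epsilon$. The key observation is that, since $c < 1/(8\sqrt e)$, the quantity $16 c^{-2} e^{-1} > 1$, so by shrinking $c$ a little (or rather by the freedom in choosing it) we can arrange for the combinatorial prefactor to be controlled; more importantly, both the spacing bound and $m_0\epsilon$ depend on $\epsilon$ and $\eta$ only through the combination $\epsilon\eta$, up to the extra lone factor of $\epsilon$ in $m_0\epsilon$.

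For part \ref{ett}: fix the large integer $n$. Since $n$ is fixed while $\beta\to\infty$, we have $n^{-1/(\beta-1)}\to 1$ and $n^{2/(\beta-1)}\to 1$. Choose $\epsilon=\epsilon_\beta$ tending to $0$ slowly enough that $(\epsilon_\beta\eta_n)^{1/(2(\beta-1))}\to 1$ — for instance any $\epsilon_\beta$ bounded below by a fixed positive power of $1/\beta$ will do, since $\eta_n$ is a fixed number in $(0,1]$ (here I use \eqref{regass}, which via the discussion after the definition of $r_n$ guarantees $\eta_n>0$, hence $\eta_n$ can be treated as a positive constant for fixed $n$). With this choice, $n^{-1/(\beta-1)}(\epsilon_\beta\eta_n)^{1/(2(\beta-1))}\to 1$, so for $\beta$ large the event $\{\sep > c\}$ contains the event in \eqref{und}. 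Simultaneously $m_0\epsilon_\beta = 16\,n^{2/(\beta-1)}c^{-2}(\epsilon_\beta\eta_n)^{-1/(\beta-1)}\epsilon_\beta\to 0$, since the first three factors converge to a finite limit and $\epsilon_\beta\to0$. Hence $\Pr_n^{(\beta)}(\{\sep>c\}\mid\event_n)\ge 1-m_0\epsilon_\beta\to 1$.

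For part \ref{tva}: now both $n\to\infty$ and $\beta\ge\mu\log n$. Insert $\eta_n\ge\const\,n^{-2\vt}$ from \eqref{proass}, and again choose $\epsilon=\epsilon_n$ (e.g. a fixed negative power of $\log n$, or any sequence with $\epsilon_n\to0$ slowly). Then $n^{-1/(\beta-1)}\le n^{-1/(\mu\log n)}\cdot n^{1/((\mu\log n)(\beta-1))}$; the dominant factor is $n^{-1/(\mu\log n)}=e^{-1/\mu}$, and when we also track the $\eta$-contribution we get $n^{-1/(\beta-1)}(\epsilon_n\eta_n)^{1/(2(\beta-1))}\ge (1-o(1))\,e^{-1/\mu}\cdot(\const\,n^{-2\vt})^{1/(2\mu\log n)} = (1-o(1))\,e^{-1/\mu}\,e^{-\vt/\mu}\cdot\const^{1/(2\mu\log n)}$, and $\const^{1/(2\mu\log n)}\to1$; all error terms involving $\epsilon_n$ and the discrepancy between $\beta-1$ and $\mu\log n$ are $1+o(1)$ uniformly in $\beta\ge\mu\log n$. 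Thus the spacing lower bound is at least $(1-o(1))\,c\,e^{-(1+\vt)/\mu}$, which eventually exceeds $c\,e^{-(1+\vt)/\mu}$ in the limit — strictly, one absorbs the $(1-o(1))$ by noting the statement uses a strict inequality $\sep>ce^{-(1+\vt)/\mu}$ and a slightly smaller comparison constant, or equivalently starts from a $c'$ slightly larger than the target. Finally the exceptional probability $m_0\epsilon_n = 16\,n^{2/(\beta-1)}c^{-2}(\epsilon_n\eta_n)^{-1/(\beta-1)}\epsilon_n$: here $n^{2/(\beta-1)}\le n^{2/(\mu\log n)}=e^{2/\mu}$ is bounded, $(\epsilon_n\eta_n)^{-1/(\beta-1)}\le(\epsilon_n\const\,n^{-2\vt})^{-1/(\mu\log n)}$ is also bounded (the $n^{-2\vt}$ contributes $e^{2\vt/\mu}$ and the $\epsilon_n$ factor contributes $\le e^{o(1)\cdot(\log(1/\epsilon_n)/\log n)}\to1$ for $\epsilon_n$ a power of $\log n$), so $m_0\epsilon_n = O(1)\cdot\epsilon_n\to0$ uniformly in $\beta\ge\mu\log n$. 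Taking the infimum over $\beta\ge\mu\log n$ and then $n\to\infty$ gives the claim.

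The main obstacle is purely bookkeeping: making the choice of $\epsilon_n$ (resp.\ $\epsilon_\beta$) explicit enough that (a) the spacing bound converges to the stated constant and (b) $m_0\epsilon$ still tends to $0$, with all estimates uniform over the range $\beta\ge\mu\log n$ in part \ref{tva}. The tension is that $\epsilon$ must go to zero to kill $m_0\epsilon$, yet not so fast that $(\epsilon\eta)^{1/(2(\beta-1))}$ drifts away from $1$; since the exponent $1/(2(\beta-1))$ itself shrinks like $1/\log n$, any polylogarithmic $\epsilon_n$ comfortably threads this needle, so there is plenty of room. A minor point requiring care is the strict-versus-non-strict inequality and the role of the hypothesis $c<1/(8\sqrt e)$, which is exactly what ensures $16c^{-2}e^{-1}>$ a usable constant and guarantees the comparison constants can be chosen on the correct side.
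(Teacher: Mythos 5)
Your bookkeeping with the parameter $\epsilon$ (letting $\epsilon\to0$ slowly, noting that $n^{-1/(\beta-1)}(\epsilon\eta)^{1/(2(\beta-1))}\to1$ for fixed $n$, respectively $\ge e^{-(1+\vt)/\mu}(1+o(1))$ under \eqref{proass} and $\beta\ge\mu\log n$, while $m_0$ stays bounded) is sound and is essentially the same reduction the paper performs. However, there is a genuine gap at the heart of your argument: you treat the constant $c$ of the Corollary as if it were the constant $c=c(n_0,\beta)$ produced by the Theorem. The Theorem's statement only guarantees \emph{some} unspecified positive constant, so arguing from the statement alone you could at best conclude ``there exists $c>0$ such that $s_0>c$ with high probability'' --- not the assertion for \emph{every} $c<1/(8\sqrt{e})$. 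The actual proof must re-enter the proof of the Theorem and track the constants quantitatively: under \eqref{regass} one has $k=1$, Lemma \ref{berns} gives $K=4\sqrt{e}(1+o(1))$, one may take $T=1+o(1)$, and the Morrey constant satisfies $C_0\le 2\pi^{1/2\beta}$; plugging these into \eqref{foma} shows that the largest constant $c(\beta)$ for which \eqref{simbel} holds asymptotically satisfies $\liminf_{\beta\to\infty}c(\beta)\ge 1/(8\sqrt{e})$. Only this makes any target $c<1/(8\sqrt{e})$ (and $ce^{-(1+\vt)/\mu}$ in part (ii), uniformly over $\beta\ge\mu\log n$) eventually attainable. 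Your proposal never performs this step.

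Relatedly, your explanation of the hypothesis $c<1/(8\sqrt{e})$ is incorrect: you claim it is needed so that ``$16c^{-2}e^{-1}>1$'' and the combinatorial prefactor $m_0$ can be controlled. The boundedness of $m_0$ has nothing to do with the size of $c$ relative to $1/(8\sqrt{e})$ (any fixed positive $c$ would do for that purpose); the threshold $1/(8\sqrt{e})$ enters solely as the asymptotic value of the separation constant coming from \eqref{foma} with the refined bounds on $K$, $T$ and $C_0$. Until you incorporate that constant-tracking (or restate the Corollary with an unspecified constant), the proof as proposed does not establish the stated result.
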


Condition \eqref{proass} is reasonable when the droplet is "sufficiently present" at $0$, see concluding remarks.


\smallskip


Case \ref{ett} of the corollary comes close
to an unpublished result due to Lieb in the zero temperature case, see 
\cite[Theorem 4]{NS}
as well as \cite{RY}; cf. \cite{AOC} for an independent proof. Our estimate for the constant $c$ should be compared with the asymptotic lower bound $1/\sqrt{e}$ for the distance between Fekete points obtained in \cite[Theorem 1]{A}. In fact, our method of proof is somewhat related to the approach in \cite{A,AOC}, see concluding remarks below.


In the present context, Abrikosov's conjecture states that under the conditions in Corollary, the system
$\{z_j\}_1^n$ should more and more resemble a honeycomb lattice as $\beta\to\infty$. The distance between
neighbouring particles in this lattice can be computed, leading naturally to the conjecture that the "right'' bound for $c$ in Corollary should be $c<2^{1/2}3^{-1/4}$. Cf. \cite{A}.

\smallskip

Here are precise assumptions to be used in the proofs below:
(i) $Q:\C\to \R\cup\{+\infty\}$ is l.s.c.; (ii) the interior of the set $\Sigma:=\{Q<\infty\}$ is non-empty; (iii) $Q$ is real-analytic on $\Int\Sigma$; (iv) $\liminf_{\zeta\to\infty}Q(\zeta)/\log |\zeta|^2>1$; (v) $S\subset \Int \Sigma$.

\smallskip

In addition, we freely use the following notation:
The $dA$-measure of a subset $\omega\subset \C$ is denoted
$|\omega|$.
By $\calW_n$ we mean the set
of weighted polynomials $f=pe^{-nQ/2}$ where $p$ is a holomorphic polynomial of degree at most $n-1$.  We denote averages by
$\fint_{\omega}g=\frac 1 {|\omega|}\int_{\omega}g\, dA$. $\D_r(\zeta)$ denotes the disc center $\zeta$ radius $r$
and we write $\D_r=\D_r(0)$.


\section*{Proofs of the main results}

Suppose that the Taylor expansion of $\Lap Q$ about $0$ takes the form
$$\Lap Q=P+\text{"higher\,order\, terms''}$$
where $P\ge 0$, $P\not\equiv 0$, and $P$ is homogeneous of some degree $2k-2$. The existence
of such a $P$ is of course a consequence of the real-analyticity of $Q$.

Following \cite{AS} we write $\tau_0$ for the positive constant such that
$$\tau_0^{-2k}=\frac 1{2\pi k}\int_0^{2\pi}P(e^{i\theta})\, d\theta.$$
Note that $\tau_0$ can be cast in the form
$$\tau_0^{-2k}=\frac {\Lap^k Q(0)}{k[(k-1)!]^2}.$$
This follows easily by expressing $P$ as a polynomial in $\zeta$ and $\bar{\zeta}$.

Using $\tau_0$, we conveniently express the microscopic scale to a negligible error, as follows
 $$r_n=\tau_0n^{-1/2k}(1+O(n^{-1/2k})),\quad (n\to\infty).$$
Note that if $k=1$ then $\tau_0=1/\sqrt{\Lap Q(0)}$.

As in \cite{AS} we define a holomorphic polynomial $H$ by
\begin{equation}\label{Hdef}H(\zeta):=Q(0)+2\d Q(0)\,\zeta+\cdots+\frac 2 {(2k)!}\d^{2k}{Q}(0)\,\zeta^{2k}.\end{equation}
We will also use the dominant homogeneous part of $Q$ at $0$, i.e., the function
$$Q_0(\zeta)=\sum_{i+j=2k,\,i,j\ge 1}\frac
 {\d^i\dbar^j Q(0)}{i!j!}\,\zeta^i\bar{\zeta}^j.$$
The point is that we have the canonical decomposition (cf. \cite{AS})
\begin{equation*}\label{candec}Q(\zeta)=\re H(\zeta)+Q_0(\zeta)+O(|\zeta|^{2k+1}),\quad
(\zeta\to0).\end{equation*}

Below we fix a large integer $n_0$. The following Bernstein-type lemma is an elaboration of \cite[Lemma 2.1]{A}.

\begin{lem} \label{berns} Suppose that $n\ge n_0$. If $f\in\calW_n$
and $f(0)\ne 0$ then there is a constant $K=K(n_0)$ such that
$$|\nabla|f|(0)|\le Kr_n^{-1} \fint_{\D_{r_n}}|f|.$$
If $\Lap Q(0)\ge \const>0$ then we can take $K(n_0)=4\sqrt{e}(1+o(1))$, $(n_0\to\infty)$.
\end{lem}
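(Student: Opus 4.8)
The plan is to reduce the gradient bound on $|f|=|p|e^{-nQ/2}$ to a sub-mean-value property for the \emph{holomorphic} function $g=pe^{-nH/2}$, where $H$ is the holomorphic polynomial from \eqref{Hdef}. Since $Q=\re H+Q_0+O(|\zeta|^{2k+1})$ near $0$, on the disc $\D_{r_n}$ we have $|f|=|g|\,e^{-nQ_0/2+O(n|\zeta|^{2k+1})}$, and because $r_n\asymp \tau_0 n^{-1/2k}$ we get $n|\zeta|^{2k+1}=O(n r_n^{2k+1})=O(r_n)=O(n^{-1/2k})$ on that disc, while $nQ_0(\zeta)=O(nr_n^{2k})=O(1)$ is uniformly bounded. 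So on $\D_{r_n}$ the weight $e^{-nQ_0/2}$ is comparable to a fixed constant (up to a factor tending to $1$), and the analysis is governed entirely by the holomorphic $g$.

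Next I would differentiate. Since $|g|$ is subharmonic and $\log|g|$ is harmonic where $g\ne 0$, $|\nabla|g|(0)|=|g(0)|\cdot|\nabla\log|g|(0)|=|g(0)|\cdot 2|(g'/g)(0)|=2|g'(0)|$. By Cauchy's estimate on a circle of radius $\rho\le r_n$, $|g'(0)|\le \rho^{-1}\max_{|\zeta|=\rho}|g(\zeta)|$; combining with the fact that $\log|g|$ is subharmonic and averaging in $\rho$ over $(0,r_n)$ yields a bound of the form $|g'(0)|\le C r_n^{-1}\fint_{\D_{r_n}}|g|$. Transferring back through the comparison $|f|\asymp|g|$ and writing $\nabla|f|(0)=e^{-nQ(0)/2}\nabla|g|(0)$ (using $f(0)\ne 0$ so $|f|$ is smooth there, and $\nabla Q_0(0)=0$, $\nabla(\re H)(0)$ contributing a term we absorb) produces $|\nabla|f|(0)|\le Kr_n^{-1}\fint_{\D_{r_n}}|f|$ with $K$ depending only on $n_0$ through the error terms.

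For the sharp constant when $\Lap Q(0)\ge\const>0$, note that this forces $k=1$, so $Q_0(\zeta)=\Lap Q(0)\,|\zeta|^2$ and $\tau_0=1/\sqrt{\Lap Q(0)}$, hence $nr_n^2\to 1$ and $nQ_0(\zeta)=|\zeta/r_n|^2(1+o(1))\le 1+o(1)$ on $\D_{r_n}$. Then $|f|=|g|e^{-nQ_0/2+o(1)}\ge e^{-1/2+o(1)}|g|$ on the disc while $|f(0)|=|g(0)|$ (as $Q_0(0)=0$), so $\fint_{\D_{r_n}}|f|\ge e^{-1/2}(1+o(1))\fint_{\D_{r_n}}|g|$. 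Optimizing the Cauchy-plus-averaging step, the extremal competitor is essentially $g(\zeta)=g(0)\cdot\zeta/r_n$ rescaled, for which $\fint_{\D_{r_n}}|g|=\tfrac23 r_n\,|g'(0)|$ and $|\nabla|g|(0)|=2|g'(0)|$, giving the ratio $|\nabla|g|(0)|/(r_n^{-1}\fint_{\D_{r_n}}|g|)=3$; carrying the $e^{1/2}$ loss through yields $K=3\sqrt{e}(1+o(1))$, which is within the claimed $4\sqrt{e}(1+o(1))$ — in fact one can be slightly wasteful in the averaging and still land at $4\sqrt{e}$, which is the form stated.

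The main obstacle is the bookkeeping in the comparison step: one must check that the passage from $f$ to the holomorphic $g$, and the replacement of $\D_{r_n}$ by a slightly smaller or larger disc to run Cauchy's inequality, costs only a factor $1+o(1)$ as $n_0\to\infty$ — uniformly in $f\in\calW_n$ and in $n\ge n_0$. This hinges precisely on the estimates $nr_n^{2k}=O(1)$ and $nr_n^{2k+1}=o(1)$, i.e. on the relation $r_n=\tau_0 n^{-1/2k}(1+O(n^{-1/2k}))$ recorded above; the polynomial nature of $p$ (degree $\le n-1$) is what makes $g=pe^{-nH/2}$ entire and keeps the Cauchy estimate legitimate. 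Everything else is a routine one-variable argument about subharmonic functions.
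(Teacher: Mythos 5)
Your overall strategy is exactly the paper's: compare $Q$ with $\re H$ on $\D_{r_n}$ at a cost $e^{\frac n2\sup_{\D_{r_n}}(Q-\re H)}=e^{\tau_0^{2k}q_0/2+o(1)}$ (which is $\sqrt e\,(1+o(1))$ when $k=1$), pass to the holomorphic function $g=pe^{-nH/2}$, and control $|g'(0)|$ by a Cauchy estimate averaged in the radius. But several identities are misstated. For real $u$ one has $|\nabla u|=2|\d u|$, and $\d|g|=g'\bar g/(2|g|)$, so $|\nabla|g||=|g'|$ and $|\nabla\log|g||=|g'/g|$ --- your extra factor $2$ is wrong. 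Also the link between $f$ and $g$ at the origin is exact and needs no ``absorbing'': since $\re H(0)=Q(0)$, $H'(0)=2\d Q(0)$, and $Q-\re H=Q_0+O(|\zeta|^{2k+1})$ vanishes to second order at $0$, one has precisely $|\nabla|f|(0)|=\babs{\frac{d}{d\zeta}\lpar pe^{-nH/2}\rpar(0)}$, with no spurious factor $e^{-nQ(0)/2}$. These slips only distort constants, so the first assertion of the lemma survives them.

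The step that does not work as written is the passage from the Cauchy estimate to the area average. You invoke $|g'(0)|\le\rho^{-1}\max_{|\zeta|=\rho}|g|$ and claim that subharmonicity of $\log|g|$ plus averaging in $\rho$ gives $|g'(0)|\le Cr_n^{-1}\fint_{\D_{r_n}}|g|$; but a maximum over a circle is not dominated by the mean over that circle (sub-mean-value inequalities go the other way), so averaging in $\rho$ cannot convert the max into the area mean, and any repair through the sub-mean-value property on discs costs constants too large for the $4\sqrt e$ claim. The correct move --- and the paper's --- is the circle-mean form of Cauchy's formula, $|g'(0)|\le\frac1{2\pi\rho^2}\int_{|\zeta|=\rho}|g|\,|d\zeta|$, integrated over $\rho\in[r_n/2,r_n]$, which yields $K=4e^{C_n/2}$ with $C_n\to\tau_0^{2k}q_0$, hence $4\sqrt e\,(1+o(1))$ when $\Lap Q(0)\ge\const>0$. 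Your sharp-constant paragraph is likewise only a heuristic: exhibiting the putative extremal $g(\zeta)=c\zeta$ proves nothing for general $f\in\calW_n$, and the value $3$ you extract inherits the factor-$2$ error. Done honestly (weight the circle-mean bound by $\rho^2$ and integrate over $(0,r_n)$) one gets $|g'(0)|\le\frac32\,r_n^{-1}\fint_{\D_{r_n}}|g|$ and hence $K\le\frac32\sqrt e\,(1+o(1))$, comfortably inside the claimed $4\sqrt e$; with that substitution your argument coincides with the paper's proof.
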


\begin{proof}
Denote $h=\re H$ where $H$ is the polynomial in \eqref{Hdef}. Also write
$$q_0=\sum_{i+j=2k,\, i,j\ge 1}\frac {|\d^i\dbar^j Q(0)|}
{i!j!}.$$
Since $r_n=\tau_0 n^{-1/2k}(1+O(n^{-1/2k}))$ we have
\begin{equation}\label{ku1}|\zeta|\le r_n\quad \Rightarrow\quad n|Q(\zeta)-h(\zeta)|\le q_0 n|\zeta|^{2k}+O(n^{-1/2k})\le C_n,
\end{equation}
where
$C_n=\tau_0^{2k}q_0+Cn^{-1/2k}.$

Now note that
$$|\nabla|f|(\zeta)|=|p'(\zeta)-n\d Q(\zeta)p(\zeta)|e^{-nQ(\zeta)/2}$$
and
$$\babs{\nabla\left(|p|e^{-nh/2}\right)(\zeta)}=\babs{\frac d {d\zeta}\left(pe^{-nH/2}\right)(\zeta)}.$$
Inserting $\zeta=0$ it is now seen that
$$|\nabla|f|(0)|=\babs{\frac d {d\zeta}\left(pe^{-nH/2}\right)(0)}.$$
We now apply a Cauchy estimate to deduce that if $r_n/2\le r \le r_n$ then
\begin{align*}\babs{\frac d {d\zeta}\left(pe^{-H/2}\right)(0)}=\frac 1 {2\pi}\babs{\int_{|\zeta|=r}
\frac {p(\zeta)e^{-nH(\zeta)/2}}{\zeta^2}\, d\zeta}
\le \frac 2
{\pi r_n^{2}}\int_{|\zeta|=r}|p|e^{-nh/2}\, |d\zeta|.
\end{align*}

By \eqref{ku1} the last integral is dominated by
$$e^{C_n/2}\int_{|\zeta|=r}|p|e^{-nQ/2}\, |d\zeta|=e^{C_n/2}\int_{|\zeta|=r}|f|\, |d\zeta|.$$
It follows that
\begin{align*}|\nabla|f|(0)|&\le \frac {4e^{C_n/2}} {\pi r_n^{3}}\int_{r_n/2}^{r_n} dr\int_{|\zeta|=r}|f|\, |d\zeta|
\le K r_n^{-1}\fint_{\D_{r_n}}|f|,
\end{align*}
where $K=4\sup_{n\ge n_0} \{e^{C_n/2}\}$. If $\Lap Q(0)\ge \const>0$ then $k=1$ and $\tau_0^2q_0=1$, which
gives $K\le 4e^{1/2+C/\sqrt{n_0}}$.
\end{proof}

The weighted Lagrange interpolation polynomials associated with a configuration $\{\zeta_{j}\}_1^n$ of distinct points are defined by
\begin{equation*}\label{lag}\ell_{j}(\zeta)=\lpar\prod_{i\ne j}(\zeta-\zeta_{i})/\prod_{i\ne j}(\zeta_{j}-\zeta_{i})\rpar\cdot e^{-n(Q(\zeta)-Q(\zeta_{j}))/2},\qquad (j=1,\ldots,n).\end{equation*}
Note that $\ell_j\in\calW_n$ and $\ell_j(\zeta_k)=\delta_{jk}$.

Now let $\{\zeta_j\}_1^n$ be a random sample from $\Prob_n^{(\beta)}$. Then $\ell_j(\zeta)$ is a random variable which depends
on the sample and on $\zeta$. In the next few lemmas, we fix an index $j$, $1\le j\le n$.

\begin{lem}\label{hass3}
Suppose that $U$ is a measurable subset of $\C$ of finite measure $|U|$. Then
\begin{equation}\label{hass2}\E_n^{(\beta)}\left[\chi_{U}(\zeta_j)\cdot \int_{\C}|\ell_j(\zeta)|^{2\beta}\, dA(\zeta)\right]= |U|.\end{equation}
\end{lem}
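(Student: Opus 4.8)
The plan is to recognize the left side of \eqref{hass2} as an integral against the joint intensity (one-point function) of the point process and use the defining property of the Lagrange polynomials, namely $\ell_j(\zeta_k)=\delta_{jk}$. Write out the expectation explicitly as an integral over $\C^n$ against $d\Prob_n^{(\beta)}$; the integrand $\chi_U(\zeta_j)\int_\C |\ell_j(\zeta)|^{2\beta}\,dA(\zeta)$ depends on all the coordinates $\zeta_1,\dots,\zeta_n$ (through the $\ell_j$), so there is no immediate simplification. The key observation will be that $|\ell_j(\zeta)|^{2\beta}e^{-\beta\Ham_n(\zeta_1,\dots,\zeta_n)}$, after we substitute the definition of $\ell_j$, essentially reproduces the Boltzmann factor for the configuration obtained by \emph{moving the particle $\zeta_j$ to the new location $\zeta$}. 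Concretely, $|\ell_j(\zeta)|^{2}=\prod_{i\ne j}\frac{|\zeta-\zeta_i|^2}{|\zeta_j-\zeta_i|^2}\cdot e^{-n(Q(\zeta)-Q(\zeta_j))}$, and raising to the power $\beta$ and multiplying by $e^{-\beta\Ham_n}$ converts the pairwise interactions $|\zeta_j-\zeta_i|$ and the potential term $Q(\zeta_j)$ into the corresponding quantities with $\zeta_j$ replaced by $\zeta$.

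Concretely, I would first note the algebraic identity
\begin{equation*}
|\ell_j(\zeta)|^{2\beta}\,e^{-\beta\Ham_n(\zeta_1,\dots,\zeta_j,\dots,\zeta_n)}=e^{-\beta\Ham_n(\zeta_1,\dots,\zeta,\dots,\zeta_n)},
\end{equation*}
where on the right $\zeta$ sits in the $j$-th slot; this is a direct consequence of the product formula for $\Ham_n$ (the double sum over $i\ne k$ splits into terms not involving $j$, which cancel, and terms involving $j$, which get their $\zeta_j$ replaced by $\zeta$) together with the single-sum potential term. Then
\begin{align*}
\E_n^{(\beta)}\Big[\chi_U(\zeta_j)\int_\C |\ell_j(\zeta)|^{2\beta}\,dA(\zeta)\Big]
&=\frac1{Z_n^{(\beta)}}\int_{\C^n}\chi_U(\zeta_j)\int_\C |\ell_j(\zeta)|^{2\beta}e^{-\beta\Ham_n(\zeta)}\,dA(\zeta)\,dA^{\otimes n}(\zeta)\\
&=\frac1{Z_n^{(\beta)}}\int_{\C^n}\chi_U(\zeta_j)\int_\C e^{-\beta\Ham_n(\zeta_1,\dots,\zeta,\dots,\zeta_n)}\,dA(\zeta)\,dA^{\otimes n}(\zeta).
\end{align*}
Now apply Fubini and, in the inner integral over $dA^{\otimes n}$, integrate out the $j$-th variable $\zeta_j$ first: since the remaining integrand no longer depends on $\zeta_j$ except through $\chi_U(\zeta_j)$, that integration produces the factor $|U|$. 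What is left is $\frac{|U|}{Z_n^{(\beta)}}\int_{\C^{n-1}}\int_\C e^{-\beta\Ham_n}\,dA(\zeta)\,dA^{\otimes(n-1)}$, which is exactly $|U|\cdot\frac{Z_n^{(\beta)}}{Z_n^{(\beta)}}=|U|$, because the product measure $dA(\zeta)\otimes dA^{\otimes(n-1)}$ over the $n$ variables (the relabelled $\zeta$ together with $\{\zeta_i\}_{i\ne j}$) is just $dA^{\otimes n}$ and $\Ham_n$ is symmetric.

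The main obstacle is purely bookkeeping: one must verify carefully that the replacement identity for $|\ell_j(\zeta)|^{2\beta}e^{-\beta\Ham_n}$ is exact, in particular that \emph{all} cross-terms and self-potential terms match up with the correct multiplicities (the factor $2$ in the exponent of $e^{-n(Q(\zeta)-Q(\zeta_j))/2}$ in the definition of $\ell_j$, combined with the power $\beta$, must reproduce the single $nQ(\zeta_j)$ appearing once — not twice — in $\Ham_n$, which it does because the double sum $\sum_{j\ne k}$ counts each pair twice while the Lagrange product $\prod_{i\ne j}$ counts it once and we square it). A secondary technical point is justifying the use of Fubini, i.e. checking that the integrand is non-negative (it is, being a product of moduli to a positive power and a positive exponential), so that Tonelli applies and no integrability hypothesis beyond $|U|<\infty$ is needed. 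Once the replacement identity and the symmetry of $\Ham_n$ are in hand, the computation collapses to the stated equality $|U|$.
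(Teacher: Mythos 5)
Your proposal is correct and follows the same route as the paper: the key replacement identity $|\ell_j(\zeta)|^{2\beta}e^{-\beta\Ham_n(\zeta_1,\ldots,\zeta_j,\ldots,\zeta_n)}=e^{-\beta\Ham_n(\zeta_1,\ldots,\zeta,\ldots,\zeta_n)}$ followed by Fubini/Tonelli, integrating out $\zeta_j$ over $U$ to get the factor $|U|$ and recognizing the remaining integral as the full partition function. The paper leaves the verification of the identity to the reader, so your bookkeeping of the cross-terms and the factor $2$ is a welcome (if routine) addition, but it is not a different argument.
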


\begin{proof}
We shall use the following identity, whose verification is left to the reader
\begin{equation*}\label{rem}|\ell_j(\zeta)|^{2\beta}e^{-\beta H_n(\zeta_1,\ldots,\zeta_j,\ldots,\zeta_n)}=e^{-\beta H_n(\zeta_1,\ldots,\zeta,\ldots,\zeta_n)}.\end{equation*}
By this and Fubini's theorem, integrating first in $\zeta_j$, we get
\begin{align*}
\int_\C dA(\zeta)&\,\bfE_n^{(\beta)}\left[|\ell_j(\zeta)|^{2\beta}\cdot\chi_{U}(\zeta_j)\right]\\
&=\int_{U}dA(\zeta_j)\int_{\C^n}d\Prob_n^{(\beta)}(\zeta_1,\ldots,\zeta,\ldots,\zeta_n)=|U|,\end{align*}
proving \eqref{hass2}.
\end{proof}

In the sequel, we assume that $n\ge n_0$ and recall the constant $K=K(n_0)$ provided
by Lemma \ref{berns}. We will write $K(n_0,\zeta)$ the same constant with $0$ replaced by $\zeta$ and
let $r_n(\zeta)$ be the microscopic scale at $\zeta$. Finally, we fix a suitable, large enough, constant $M$; we may take $M=3$ for example.

It is easy to see that  there is a constant $T=T(M,n_0)\ge 1$ such that if $\zeta\in \D_{Mr_n}$ and $n\ge n_0$ then
$T^{-1}r_n(\zeta)\le r_n(0)\le Tr_n(\zeta)$. If $\Lap Q(0)\ge\const>0$ we might take $T=1+o(1)$ as $n_0\to\infty$.

\begin{lem} \label{jorm} We have that
\begin{equation}\label{hass}\frac 1 {r_n^2}\E_n^{(\beta)}\left[\chi_{\D_{r_n}}(\zeta_j)\cdot \int_{\C}|\ell_j(\zeta)|^{2\beta}\, dA(\zeta)\right]= 1.\end{equation}
Now suppose that $\beta\ge 1/2$, $n\ge n_0$, $K=\sup_{\zeta\in \D_{Mr_n}}K(\zeta)$, and $r_n=r_n(0)$. Then
\begin{equation}\label{has}\frac 1 {r_n^2}\E_n^{(\beta)}\left[\chi_{\D_{r_n}}(\zeta_j)\cdot \int_{\D_{Mr_n}}|\nabla|\ell_j|(\zeta)|^{2\beta}\, dA(\zeta)\right]\le T^{2\beta+4}K^{2\beta} r_n^{-2\beta}.\end{equation}
\end{lem}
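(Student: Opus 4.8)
The plan is to reduce the gradient estimate to the $L^1$-average bound already packaged in Lemma \ref{berns}, applied not at the base point $0$ but at the (random) location $\zeta_j$, and then to feed the resulting expression into the identity \eqref{hass}.

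First, I would observe that \eqref{hass} is an immediate consequence of Lemma \ref{hass3}: take $U=\D_{r_n}$, so $|U|=r_n^2$ (recall $dA=dxdy/\pi$ and $\D=\D_1$ has $dA$-measure $1$), and divide by $r_n^2$. So the content is entirely in \eqref{has}.

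Next, for the gradient bound I would fix a sample with $\zeta_j\in\D_{r_n}$ and a point $\zeta\in\D_{Mr_n}$. The function $\ell_j$ lies in $\calW_n$ and, crucially, one may translate the coordinate system so that $\zeta$ plays the role of the origin; applying Lemma \ref{berns} at the point $\zeta$ (this is legitimate since $\ell_j(\zeta)$ is generically nonzero, and the estimate is anyway trivial where it vanishes, or one argues by a limiting/density argument) gives
\begin{equation*}
|\nabla|\ell_j|(\zeta)|\le K(\zeta)\,r_n(\zeta)^{-1}\fint_{\D_{r_n(\zeta)}(\zeta)}|\ell_j|.
\end{equation*}
Now I would use the comparison $T^{-1}r_n(\zeta)\le r_n(0)\le Tr_n(\zeta)$ valid on $\D_{Mr_n}$: this lets me replace $r_n(\zeta)^{-1}$ by at most $T r_n^{-1}$, and (choosing $M$ large enough, $M=3$ suffices, so that $\D_{r_n(\zeta)}(\zeta)\subset\D_{Mr_n}$ when $\zeta\in\D_{Mr_n}$ — this is where the "suitable large $M$" is used) bound the average over $\D_{r_n(\zeta)}(\zeta)$ by a constant times the average over $\D_{Mr_n}$, picking up another factor $|\D_{Mr_n}|/|\D_{r_n(\zeta)}|\le (MTr_n/r_n)^2\cdot$const, which one absorbs into a power of $T$. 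After raising to the power $2\beta$ and using Jensen's inequality to pull the power inside the average (here $2\beta\ge 1$ is used), one gets a pointwise bound of the shape
\begin{equation*}
|\nabla|\ell_j|(\zeta)|^{2\beta}\le T^{2\beta+c}K^{2\beta}r_n^{-2\beta}\fint_{\D_{Mr_n}}|\ell_j|^{2\beta},
\end{equation*}
with $K=\sup_{\D_{Mr_n}}K(\cdot)$; tracking the exponents carefully gives the exponent $2\beta+4$ in the statement.

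Finally, I would integrate this over $\zeta\in\D_{Mr_n}$ (the two averaging integrals combine to leave just $\fint_{\D_{Mr_n}}|\ell_j|^{2\beta}\cdot|\D_{Mr_n}|$, i.e.\ $\int_{\D_{Mr_n}}|\ell_j|^{2\beta}$ up to the constant), multiply by $\chi_{\D_{r_n}}(\zeta_j)$, take $\E_n^{(\beta)}$, and dominate $\int_{\D_{Mr_n}}|\ell_j|^{2\beta}$ by $\int_\C|\ell_j|^{2\beta}$; the expectation of $\chi_{\D_{r_n}}(\zeta_j)\int_\C|\ell_j|^{2\beta}$ is exactly $r_n^2$ by \eqref{hass}. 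Dividing by $r_n^2$ yields the claimed bound $T^{2\beta+4}K^{2\beta}r_n^{-2\beta}$. The one point requiring care — and the main obstacle — is the bookkeeping of the powers of $T$ when passing between the microscopic scales $r_n(\zeta)$ and $r_n(0)$, between the discs $\D_{r_n(\zeta)}(\zeta)$ and $\D_{Mr_n}$, and through Jensen's inequality, so that the exponent comes out as exactly $2\beta+4$ and not something larger; everything else is a routine application of Lemma \ref{berns}, the scale comparison, and Lemma \ref{hass3}.
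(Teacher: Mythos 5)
Your handling of \eqref{hass} is exactly the paper's (Lemma \ref{hass3} with $U=\D_{r_n}$, $|U|=r_n^2$), and your starting point for \eqref{has} --- Lemma \ref{berns} applied at the point $\zeta$, then Jensen's inequality using $2\beta\ge 1$ --- also coincides with the paper's. The gap is in the step where you majorize the local average $\fint_{\D_{r_n(\zeta)}(\zeta)}|\ell_j|^{2\beta}$ by an average over the fixed large disk and then integrate $\zeta$ over $\D_{Mr_n}$. First, the containment $\D_{r_n(\zeta)}(\zeta)\subset\D_{Mr_n}$ for all $\zeta\in\D_{Mr_n}$ is false no matter how large $M$ is (take $\zeta$ near the circle $|\zeta|=Mr_n$); this is repairable by enlarging the target disk or by bounding the inner integral by $\int_\C|\ell_j|^{2\beta}$. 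Second, and more seriously, this route inevitably produces the area ratio $|\D_{Mr_n}|/r_n(\zeta)^2\approx M^2T^2$, i.e.\ a bound of the shape $M^2T^{2\beta+2}K^{2\beta}r_n^{-2\beta}$ rather than the stated $T^{2\beta+4}K^{2\beta}r_n^{-2\beta}$. The factor $M^2$ (equal to $9$ for the paper's choice $M=3$) cannot be ``absorbed into a power of $T$'', because $T\ge 1$ may be $1+o(1)$; so your assertion that careful bookkeeping yields exactly the exponent $2\beta+4$ is not substantiated by the argument you describe.

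The missing idea is the interchange of integrals that the paper uses. Write the pointwise bound as $K^{2\beta}r_n(\zeta)^{-2\beta-2}\int_{\D_{r_n(\zeta)}(\zeta)}|\ell_j(\eta)|^{2\beta}\,dA(\eta)$, bound $r_n(\zeta)^{-2\beta-2}\le T^{2\beta+2}r_n^{-2\beta-2}$, take the expectation inside, and then swap the $\zeta$- and $\eta$-integrations (Fubini). For fixed $\eta$, the set of $\zeta$ with $\eta\in\D_{r_n(\zeta)}(\zeta)$ is contained in $\D_{Tr_n}(\eta)$, whose $dA$-measure is $T^2r_n^2$; the remaining $\eta$-integral, extended to all of $\C$, equals exactly $r_n^2$ by \eqref{hass}. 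This is how the constant comes out as $T^{2\beta+2}\cdot T^2=T^{2\beta+4}$ with no dependence on $M$: the area of $\D_{Mr_n}$ never enters. (An extra $\beta$-independent factor such as $M^2$ would in fact wash out of the Corollary after taking $2\beta$-th roots, but it does not prove the lemma with the constant as stated.)
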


\begin{proof} The identity \eqref{hass} follows from Lemma \ref{hass3} with $U=\D_{r_n}$.

To prove \eqref{has} we
fix a non-zero
$f\in\calW_n$ and assume that $f(\zeta)\ne 0$ where $\zeta\in\D_{Mr_n}$.
By Lemma \ref{berns} and Jensen's inequality, we have for all $\beta\ge 1/2$ that
$$|\nabla|f|(\zeta)|^{2\beta}\le K^{2\beta} r_n(\zeta)^{-2\beta}\fint_{\D_{r_n(\zeta)}(\zeta)}|f|^{2\beta}.$$
Applying this with $f=\ell_j$ and taking expectations, we get
\begin{align*}\E_n^{(\beta)}&\left[\int_{\D_{Mr_n}} |\nabla|\ell_j|(\zeta)|^{2\beta}\, dA(\zeta)\cdot\chi_{\D_{r_n}}(\zeta_j)\right]\\
&\le K^{2\beta} \int_{\D_{Mr_n}} dA(\zeta)\,r_n(\zeta)^{-2\beta-2}\int_{\D_{r_n(\zeta)}(\zeta)}\, dA(\eta)\,\E_n^{(\beta)}\left[\chi_{\D_{r_n}}(\zeta_j)\,|\ell_j(\eta)|^{2\beta}\right]
\\
&\le T^{2\beta+2}K^{2\beta} r_n^{-2\beta-2}\int_{\C} dA(\eta)\, \E_n^{(\beta)}\,\left[\chi_{\D_{r_n}}(\zeta_j)\,|\ell_j(\eta)|^{2\beta}\right]
\int_{\D_{Tr_n}(\eta)}\, dA(\zeta)\\
&= T^{2\beta+4}K^{2\beta} r_n^{-2\beta+2},\end{align*}
where we used \eqref{hass} in the last step.
\end{proof}

In the following, we let $z$ and $\zeta$ denote complex variables related via
$$z=r_n^{-1}\zeta.$$
We shall use the random functions $\vro_j$ defined by
$$\vro_j(z)=|\ell_j(\zeta)|=|\ell_j(r_nz)|.$$
Thus $\vro_j(z_k)=\delta_{jk}$ where $\{z_k\}_1^n$ is the rescaled process.


\begin{lem} \label{kotte} Let $\beta\ge 1/2$.
Then with notation as above
\begin{equation}\label{lotten}\Exn_n^{(\beta)}\left[\chi_{\D}(z_j)\,\|\nabla \vro_j\|_{L^{2\beta}(\D_M)}^{\, 2\beta}\right]\le T^4(TK)^{2\beta}.\end{equation}
\end{lem}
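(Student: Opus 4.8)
The plan is to deduce Lemma \ref{kotte} directly from the estimate \eqref{has} of Lemma \ref{jorm} by an elementary change of scale. The only points that need attention are the chain rule for the gradient of a rescaled function and the bookkeeping of the several powers of $r_n$ that appear; there is no genuinely hard step, since the substantive work is already contained in Lemmas \ref{berns}, \ref{hass3} and \ref{jorm}.

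First I would record the scaling identity for the gradient. Since $\vro_j(z)=|\ell_j(r_nz)|$, the chain rule gives $\nabla_z\vro_j(z)=r_n\,(\nabla|\ell_j|)(r_nz)$ at every point where $\ell_j$ does not vanish; the zero set of $\ell_j$ is contained in $\{z_k\}$ and hence has vanishing two–dimensional measure, so it is irrelevant for the $L^{2\beta}(\D_M)$–norm. Substituting $\zeta=r_nz$, so that $dA(\zeta)=r_n^2\,dA(z)$ and $z\in\D_M\iff\zeta\in\D_{Mr_n}$, we obtain
$$\|\nabla\vro_j\|_{L^{2\beta}(\D_M)}^{\,2\beta}=r_n^{2\beta}\int_{\D_M}|(\nabla|\ell_j|)(r_nz)|^{2\beta}\,dA(z)=r_n^{2\beta-2}\int_{\D_{Mr_n}}|\nabla|\ell_j|(\zeta)|^{2\beta}\,dA(\zeta).$$
Likewise $z_j\in\D$ if and only if $\zeta_j\in\D_{r_n}$, i.e. $\chi_\D(z_j)=\chi_{\D_{r_n}}(\zeta_j)$, and since $\Pr_n^{(\beta)}$ is the push-forward of $\Prob_n^{(\beta)}$ under $\zeta\mapsto r_n^{-1}\zeta$, taking $\Exn_n^{(\beta)}$ on the left corresponds to taking $\E_n^{(\beta)}$ on the right:
$$\Exn_n^{(\beta)}\!\left[\chi_\D(z_j)\,\|\nabla\vro_j\|_{L^{2\beta}(\D_M)}^{\,2\beta}\right]=r_n^{2\beta-2}\,\E_n^{(\beta)}\!\left[\chi_{\D_{r_n}}(\zeta_j)\int_{\D_{Mr_n}}|\nabla|\ell_j|(\zeta)|^{2\beta}\,dA(\zeta)\right].$$

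Finally I would invoke \eqref{has}, which is valid for $\beta\ge1/2$ and $n\ge n_0$ with $K=\sup_{\zeta\in\D_{Mr_n}}K(\zeta)$ and which bounds the expectation on the right-hand side by $r_n^2\cdot T^{2\beta+4}K^{2\beta}r_n^{-2\beta}=T^{2\beta+4}K^{2\beta}r_n^{2-2\beta}$. Multiplying this by the prefactor $r_n^{2\beta-2}$, all powers of $r_n$ cancel, and what remains is precisely $T^{2\beta+4}K^{2\beta}=T^4(TK)^{2\beta}$, which is \eqref{lotten}. The only thing one must be careful about is keeping the exponents straight; the analytic content has already been used.
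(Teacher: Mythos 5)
Your proposal is correct and is exactly the paper's argument: the paper also obtains \eqref{lotten} by rewriting \eqref{has} as a bound on $r_n^{-2}\,\E_n^{(\beta)}[\chi_{\D_{r_n}}(\zeta_j)\int_{\D_{Mr_n}}(r_n|\nabla|\ell_j||)^{2\beta}\,dA]$ and rescaling $\zeta=r_nz$, which is precisely your change of variables with the powers of $r_n$ cancelling. Your extra remarks on the chain rule and the negligible zero set of $\ell_j$ just make explicit what the paper leaves to the reader.
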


\begin{proof}

The inequality \eqref{has} says that
$$\frac 1 {r_n^2}\bfE_n^{(\beta)}\left[\chi_{\D_{r_n}}(\zeta_j)\int_{\D_{Mr_n}}(r_n|\nabla|\ell_j|(\zeta)|)^{2\beta}
\, dA(\zeta)\right]\le T^4(TK)^{2\beta}.$$
Rescaling we immediately obtain \eqref{lotten}.
\end{proof}

Suppose that $\beta>1$. We will use Morrey's inequality, which asserts that for all real-valued $f$ in the Sobolev space $W^{1,2\beta}(\D_M)$, all $z,w\in\D_{M/\sqrt{2}}$, we have
\begin{align}\label{sobin2}
|f(z)-f(w)|\le C\|\nabla f\|_{L^{2\beta}(\D_M)}|z-w|^{1-1/\beta}.
\end{align}
See \cite[Corollary 9.12]{B} and its proof. In fact, the proof in \cite[p. 283]{B} shows that \eqref{sobin2} holds with
$C=C_0(1-1/\beta)^{-1}$ where $C_0\le 2\pi^{1/2\beta}$.

\smallskip

We are now ready to finish the proof of Theorem.

Recall that $\event_n$ denotes the event that at least one particle hits $\D$ and fix an arbitrary $j$, $1\le j\le n$.
Assuming that
$\Pr_n^{(\beta)}(\event_n)\ge \eta>0$, we deduce from Lemma \ref{kotte} the following inequality for the conditional expectation
\begin{equation}\label{botten}\Exn_n^{(\beta)}\left(\chi_{\D}(z_j)\,\|\nabla \vro_j\|_{L^{2\beta}(\D_M)}^{\, 2\beta}\,\bigm|\, \event_n\right)\le \frac {T^4(TK)^{2\beta}} \eta.\end{equation}

Fix $\epsilon>0$ and recall that $\sep$ denotes the distance from a point in $\{z_j\}_1^n\cap \D$ to its closest neighbour, where we assume that $\{z_j\}_1^n\in \event_n$.
We must prove that
$\sep\ge c(\epsilon\eta)^{1/2(\beta-1)}$ with (conditional) probability at least $1-\epsilon$.


For each $\lambda>0$ we have by Chebyshev's inequality and \eqref{botten}
\begin{align*}\Pr_n^{(\beta)}\left(\left\{\chi_\D(z_j)\|\nabla \vro_j\|_{L^{2\beta}(\D_M)}^{\,2\beta}>\lambda\right\}\,\bigm|\, \event_n\right)
&\le \frac 1 \lambda \Exn_n^{(\beta)}\left(\chi_\D(z_j)\|\nabla \vro_j\|_{L^{2\beta}(\D_M)}^{\,2\beta}\,\bigm|\, \event_n\right)\\
&\le \frac {T^4(TK)^{2\beta}}{\eta \lambda},
\end{align*}
which implies
\begin{align*}\Pr_n^{(\beta)}\left(\left\{\sum_{j=1}^n\chi_\D(z_j)\|\nabla \vro_j\|_{L^{2\beta}(\D_M)}^{\,2\beta}>n\lambda\right\}\,\bigm|\, \event_n\right)\le n\frac {T^4(TK)^{2\beta}}{\eta \lambda}.\end{align*}
Given a random sample $\{z_j\}_1^n\in\event_n$ we let $I_n=I_n(\{z_j\}_1^n)$ be the random, nonempty set of indices $j$ for which $z_j\in\D$; we then have
\begin{align}\label{due}\Pr_n^{(\beta)}\left(\left\{\sum_{j\in I_n}\|\nabla \vro_j\|_{L^{2\beta}(\D_M)}^{\,2\beta}>n\lambda\right\}\,\bigm|\, \event_n\right)\le n\frac {T^4(TK)^{2\beta}}{\eta \lambda}.\end{align}

We now set $$\lambda=n\cdot T^4(TK)^{2\beta}\eta^{-1}\epsilon^{-1}.$$
Consider the event $A_n$ consisting of all samples $\{z_j\}_1^n\in\event_n$ such that there is a $j\in I_n(\{z_j\}_1^n)$ for which $\|\nabla\vro_j\|_{L^{2\beta}(\D_M)}^{\, 2\beta}>n\lambda$.
By \eqref{due} and our choice of $\lambda$ we have $\Pr_n^{(\beta)}(A_n|\event_n)\le \epsilon$.
Hence, with conditional probability at least $1-\epsilon$,
\begin{align}\label{tres}j\in I_n(\{z_j\}_1^n)\quad \Rightarrow\quad \|\nabla\vro_j\|_{L^{2\beta}(\D_M)}\le (n\lambda)^{1/2\beta}=n^{1/\beta}T^{2/\beta}(TK)(\epsilon\eta)^{-1/2\beta}.\end{align}

Now fix a sample $\{z_j\}_1^n\in\event_n$ and an index $j\in I_n(\{z_j\}_1^n)$. Let $z_k$ be a closest neighbour to $z_j$. By Morrey's inequality \eqref{sobin2} and \eqref{tres} there is another constant $C=C_0(1-1/\beta)^{-1}$ such that, with conditional probability at least $1-\epsilon$, we have either $|z_j-z_k|\ge M/\sqrt{2}$ or
\begin{equation}\label{abu}1=|\vro_j(z_j)-\vro_j(z_k)|\le n^{1/\beta}\cdot CT^{1+2/\beta}K(\epsilon\eta)^{-1/2\beta}|z_j-z_k|^{1-1/\beta},\end{equation}
i.e.,
$|z_j-z_k|\ge cn^{-\frac 1 {\beta-1}}(\epsilon\eta)^{\frac 1 {2(\beta-1)}}$
where we may chose
\begin{equation}\label{foma}c=(CT^{1+2/\beta}K)^{-\beta/(\beta-1)}=(1-1/\beta)^{\beta/(\beta-1)}(C_0T^{1+2/\beta}K)^{-\beta/(\beta-1)}.
\end{equation}

We have shown that
\begin{equation}\label{simbel}\min_{k\ne j}|z_j-z_k|\ge cn^{-\frac 1 {\beta-1}}(\epsilon\eta)^{\frac 1{2(\beta-1)}}\end{equation} with probability at least $1-\epsilon$. The formula \eqref{foma} shows that $c$ can be chosen independent of $\beta$ when
$\beta\ge \beta_0>1$.


\begin{lem} \label{L5} Let $N_\D$ be the number of particles which fall in $\D$. Also define $r_0=cn^{-\frac 1 {\beta-1}}(\epsilon\eta)^{\frac 1 {2(\beta-1)}}/2$ and $m_0=4/r_0^2$.
Then $N_\D\le m_0$ and $s_0\ge cn^{-\frac 1 {\beta-1}}(\epsilon\eta)^{\frac 1{2(\beta-1)}}$ with conditional probability at least $1-m_0\epsilon$.
 \end{lem}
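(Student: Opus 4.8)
The plan is to combine the separation estimate \eqref{simbel}, which we have already established for a single fixed index $j$, with a union bound over those indices that can possibly contribute, and then to translate the resulting bound into the clean conditional-probability statement. First I would observe that conditional on $\event_n$ the number $N_\D$ of particles in $\D$ is a deterministic function of the sample, but it is a priori unbounded; so the naive union bound over $j\in I_n$ has a random number of terms. The way around this is to run the union bound over \emph{all} $n$ indices: for each fixed $j$, the event $\{\chi_\D(z_j)\|\nabla\vro_j\|_{L^{2\beta}(\D_M)}^{2\beta}>n\lambda\}$ has conditional probability at most $T^4(TK)^{2\beta}/(\eta\lambda)$ by \eqref{botten} and Chebyshev, and summing over $j=1,\dots,n$ gives the bound $n\,T^4(TK)^{2\beta}/(\eta\lambda)$ in \eqref{due}. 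With the choice $\lambda=n T^4(TK)^{2\beta}\eta^{-1}\eps^{-1}$ this is exactly $\eps$, which is where the factor $m_0$ is \emph{not} yet needed — \eqref{simbel} already holds with probability $1-\eps$ for each $j$, but the subtlety for the lemma is that we want the bound on $s_0$, i.e. the minimum over \emph{all} $j\in I_n$ simultaneously, together with the a priori bound $N_\D\le m_0$.

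Next I would establish the deterministic bound $N_\D\le m_0$ on the event that \eqref{simbel} holds for every $j\in I_n$. If every particle $z_j\in\D$ is at distance at least $2r_0 = cn^{-1/(\beta-1)}(\eps\eta)^{1/(2(\beta-1))}$ from its nearest neighbour, then the disks $\D_{r_0}(z_j)$, $j\in I_n$, are pairwise disjoint; each has area $r_0^2$ (in the $dA$ normalization) and each is contained in $\D_{1+r_0}\subset\D_2$, which has area $4$. Hence $N_\D\, r_0^2\le 4$, i.e. $N_\D\le 4/r_0^2 = m_0$. (One should note $r_0<1$ so that $\D_{1+r_0}\subset\D_2$; this holds because the constant $c$ is small and $n\ge n_0$, or one can simply absorb the discrepancy into the constant $M$.) This step is essentially a volume/packing argument and carries no real difficulty.

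The final step is the probability bookkeeping. We know from \eqref{tres} that with conditional probability at least $1-\eps$, every $j\in I_n$ satisfies $\|\nabla\vro_j\|_{L^{2\beta}(\D_M)}\le(n\lambda)^{1/2\beta}$, and on this same event Morrey's inequality forces, for each such $j$ and its nearest neighbour $z_k$, either $|z_j-z_k|\ge M/\sqrt2$ or \eqref{abu}, hence $\min_{k\ne j}|z_j-z_k|\ge 2r_0$ in all cases (since $2r_0\le M/\sqrt2$ for $n\ge n_0$). Therefore on an event of conditional probability at least $1-\eps$ we have simultaneously $s_0\ge 2r_0 > cn^{-1/(\beta-1)}(\eps\eta)^{1/(2(\beta-1))}$ wait — I would be slightly careful here: \eqref{tres} as stated gives the bound with probability $1-\eps$ for the quantifier ``for all $j\in I_n$'', so the packing bound $N_\D\le m_0$ and the spacing bound $s_0\ge cn^{-1/(\beta-1)}(\eps\eta)^{1/(2(\beta-1))}$ both hold on that single event. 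The appearance of $m_0\eps$ rather than $\eps$ in the statement of the lemma is the one place I expect a genuine (if small) obstacle: I believe it comes from the fact that one actually wants \eqref{simbel} to hold for \emph{every} index in $I_n$, and rather than invoking the ``for all $j$'' version of \eqref{tres} directly one unions the single-index event \eqref{simbel} over the (at most $m_0$) relevant indices, each contributing $\eps$, giving $m_0\eps$; combined with $N_\D\le m_0$ (which holds on the complementary good event) this yields the claim. So the proof would read: on the event $A_n^c$ of conditional probability $\ge 1-\eps$ we have $N_\D\le m_0$ by the packing argument applied to \eqref{tres}; then union \eqref{simbel} over the $j$ in $I_n$, of which there are at most $m_0$, to get $s_0\ge cn^{-1/(\beta-1)}(\eps\eta)^{1/(2(\beta-1))}$ with conditional probability at least $1-m_0\eps$. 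I would write this out carefully to make sure the two good events are compatible and that no index is double-counted; everything else is routine.
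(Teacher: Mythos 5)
Your proposal is correct and follows essentially the same route as the paper: the per-index separation estimate \eqref{simbel} is unioned over the particles falling in $\D$, and the area comparison of the disjoint disks $\D_{r_0}(z_j)$ inside $\D_2$ bounds the number of such particles by $m_0=4/r_0^2$, giving the total loss $m_0\epsilon$. Your side remark that the simultaneous (``for all $j\in I_n$'') form of \eqref{tres} would seemingly yield the stronger bound $1-\epsilon$ is a fair observation, but your final writeup, like the paper's proof, settles for the per-index form of \eqref{simbel} together with the union bound and the packing argument, which is exactly what the lemma requires.
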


\begin{proof} Suppose that at least $m$ particles, denoted $z_1,\ldots,z_m$, fall in $\D$.
For $1\le j\le m$ let $E_j$ be the event
that the disk $\D_{r_0}(z_j)$ contains no point $z_k$ with $1\le k\le n$,
$k\ne j$. Then $\Pr_n^{(\beta)}(E_j^c|\event_n)\le\epsilon$, where $E_j^c$ is the complementary event, so
$$\Pr_n^{(\beta)}(\cap_{j=1}^m E_j|\event_n)=1-\Pr_n^{(\beta)}(\cup_{j=1}^m E_j^c|\event_n)\ge 1-m\epsilon.$$

 It follows that if at least $m$ particles fall in $\D$ then with probability at least $1-m\epsilon$ there are $m$ disjoint disks of radius $r_0$ inside $\D_2$.
Comparing areas we see that $mr_0^2\le 4$, i.e., $m\le m_0$.
\end{proof}

The lemma says that if $m_0=m_0(\beta,\epsilon,\eta,n)=16n^{\frac 2 {\beta-1}}c^{-2}(\epsilon\eta)^{-\frac 1 {\beta-1}}$ then
\begin{equation}\label{cl5}\Pr_n^{(\beta)}(\{s_0\ge cn^{-\frac 1 {\beta-1}}(\epsilon\eta)^{\frac 1 {2(\beta-1)}}\}|\event_n)\ge 1-\epsilon m_0.\end{equation}
This proves Theorem.

\medskip

Now assume that $\Lap Q(0)\ge \const>0$. Then by Lemma \ref{berns},
the constant $K$ there
might be taken as
$K=4\sqrt{e}(1+o(1))$ while we may take $T=1+o(1)$ as $n\to\infty$. Hence $C_0T^{1+2/\beta}K\le 8\sqrt{e}(\pi T^4)^{1/2\beta}(1+o(1))$, and so, if $c(\beta)$ denotes the largest constant such that the estimate \eqref{simbel} holds asymptotically, as $n_0\to\infty$, then $$\liminf_{\beta\to\infty}c(\beta)\ge
1/(8\sqrt{e}).$$ Applying the assumptions that
$\eta\ge \const n^{-2\vt}$ and $\beta\ge \mu\log n$ we deduce that (as $n\to\infty$)
$$n^{-\frac 1 {\beta-1}}(\epsilon\eta)^{\frac 1 {2(\beta-1)}}\ge e^{-\frac 1 \mu-\frac \vt\mu}(1+o(1))$$
and
$$m_0\le 2^{10}e^{1+\frac 2 \mu+\frac {2\vt}\mu}(1+o(1)).$$
It is now clear from \eqref{cl5}
that Corollary is a consequence of Theorem.

\medskip

q.e.d.

\section*{Concluding remarks} \label{gens}

It is natural to ask for conditions implying that the probabilities $\eta_n=\Pr_n^{(\beta)}(\event_n)$ satisfy something like
$$\limsup_{n\to\infty} \frac {\log(1/\eta_n)}{\log n}<\infty.$$
In the case $\beta=1$, the $\eta_n$ are bounded below if
$\liminf_{n\to\infty}r_n^2|\D_{r_n}\cap S|>0$.
("The proportion of the area of $\D_{r_n}$ which falls inside the droplet is bounded below.")
Proofs depending on estimates for the
Bergman function can be found in \cite{AKM,AKMW,AS2}. On the other hand, if $0$ is well in the exterior, or if $0$ is a singular boundary point, $\eta_n$ drops off to zero quickly as $n\to\infty$.
It is natural to expect a similar behaviour for any given $\beta$.

\smallskip

The analysis of Fekete configurations in \cite{A,AOC} depends on the inequality
$|\ell_j|\le 1$ for the associated weighted Lagrange polynomials $\ell_j$. This bound
plays a similar role when $\beta=\infty$ as the $L^{2\beta}$-estimate in Lemma \ref{hass3}
does in the present case.
The idea of using an $L^{2\beta}$-bound on Lagrange sections occurs in \cite{CMMOC}.
The context there is different, but in a way, we have elaborated on this idea here.




\smallskip


 When $\beta=1$, limiting point fields $\{z_j\}_1^\infty$ have been identified in many cases, \cite{AKM}.
When $\beta>1$, the determinantal structure is lost, and the problem of calculating limiting point fields remains a challenge.
The question is perhaps especially intriguing when we rescale about a regular boundary point of
$S$, or
about some other kind of special point, cf. \cite{AKMW,AS2,LCCW}.

At a regular boundary point, it seems plausible that the distribution should be translation invariant in the direction tangent to the boundary, i.e., in a suitable coordinate system, the distribution depends only on $x=\re z$.
The Hall effect is believed to give rise to certain irregularities in the distribution, which are to be located slightly to the inside of the boundary,
see \cite{CFTW}.
While our results provide more and more information when $\beta$ gets very large, the results in \cite{CFTW}, by contrast, seem to be more accurate when $\beta$ is close to $1$. A corresponding analysis was performed earlier in the bulk in \cite{J}; see \cite{CLW,FK,JLS} for more recent developments.

In the case of "moderately sized'' $\beta$, $1\ll\beta\ll\infty$, neither of the methods seem to give very clear pictures of the situation. However, the recent paper \cite{F0} gives some results for the case $\beta=2$. Moreover,
the paper \cite{CLWH} suggests that a phase-transition ("freezing'') should take place after a certain finite value $\beta=\beta_0$. The study of existence and possible size of
melting temperature $1/\beta_0$ is currently an active area of research.



\smallskip

 By the "hard edge $\beta$-ensemble'' in external potential $Q$, we mean the ensemble obtained by redefining $Q$ to be $+\infty$ outside of the droplet. Cf. \cite{AKM,AKMW,Se} for the case $\beta=1$. The question of spacings in this setting will be taken up elsewhere.



\smallskip

Ward's identity (or "loop equation", "fundamental relation") is a relation
connecting the one- and two-point functions of a $\beta$-ensemble.
In the present context, it was used systematically
by Wiegmann and Zabrodin and their school, and it is an important tool in conformal field theory (CFT). In fact a whole family of Ward identities is known, see \cite{KM}.

In the paper \cite{AM}, Ward's identity was used to give a relatively simple
proof of
Gaussian field convergence of linear statistics of a $\beta=1$ ensemble. A similar statement is believed to hold for general $\beta$-ensembles. There has been progress on $\beta$-ensembles recently: the paper \cite{BBNY2} seems to prove Gaussian field convergence in the \textit{bulk} of the droplet. To the best of our knowledge, the full plane field convergence for general $\beta$ still seems to be an open problem.



The microscopic version of Ward's identity was introduced fairly recently in \cite{AKM}. It is called \textit{Ward's equation}. See \cite[Section 7.7]{AKM} for the general case of $\beta$-ensembles. It is natural to ask how
Ward's equation fits into the present context. We hope to come back to this later on.




\end{document}